\newtheorem{thm}{Theorem}[section]
\newtheorem{prop}[thm]{Proposition}
\newtheorem{lem}[thm]{Lemma}
\theoremstyle{definition}
\newtheorem{rem}[thm]{Remark}
\newcommand{\e}{\varepsilon}
\newcommand{\NN}{\mathbb N}
\newcommand{\ZZ}{\mathbb Z}
\newcommand{\RR}{\mathbb R}
\newcommand{\CC}{\mathbb C}
\newcommand{\PP}{\mathbb P}
\newcommand{\QQ}{\mathbb Q}
\newcommand{\lr}{\longrightarrow}
\title{Almost complex structures on connected sums of complex projective spaces}
\author{Oliver Goertsches and Panagiotis Konstantis\footnote{Fachbereich Mathematik und Informatik, Philipps--Universit\"at Marburg, Hans-Meerwein-Stra\ss e 6, 35032 Marburg. Email: \texttt{\href{mailto:goertsch@mathematik.uni-marburg.de}{goertsch@mathematik.uni-marburg.de}}, \texttt{\href{mailto:pako@mathematik.uni-marburg.de}{pako@mathematik.uni-marburg.de}}}}
\date{}
\begin{document}

\maketitle

\begin{abstract}
  We show that the $m$-fold connected sum $m\# \CC\PP^{2n}$ admits an almost complex structure if and only if $m$ is odd.
\end{abstract}
\section{Introduction}
A \emph{complex structure} on a real vector bundle $F$ over a connected CW complex $X$ is
a complex vector bundle $E$ over $X$ such that its underlying real vector bundle $E_\RR$ is
isomorphic to $F$. A \emph{stable complex structure} on $F$ is a complex
structure on $F \oplus \e^d$, where $\e^d$ is the $d$ dimensional trivial real vector bundle over $X$.
For $X$ a manifold we say that $X$ has an \emph{almost complex structure} (respectively
\emph{stable almost complex structure}) if its tangent bundle admits an complex structure
(respectively stable complex structure).
Motivated by the question in \cite{mathoverflow} we consider in this paper the $m$-fold connected sum of complex projective spaces $m\#\CC\PP^{2n}$.

As shown by Hirzebruch \cite[Kommentare, p.\ 777]{Hirzebruch}, a necessary condition for the existence of an almost complex structure on a $4n$-dimensional compact manifold $M$ is the congruence $\chi(M) \equiv (-1)^n\sigma(M) \text{ mod } 4$, where $\chi(M)$ is the Euler characteristic and $\sigma(M)$ the signature of $M$. Thus, for even $m$, the connected sums above cannot carry an almost complex structure. We will show that for odd $m$ they do admit almost complex structures, thus showing

\begin{thm}\label{T:Maintheorem} The $m$-fold connected sum $m\# \CC\PP^{2n}$ admits an almost complex structure if and only if $m$ is odd.
\end{thm}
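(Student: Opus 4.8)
The reverse implication is exactly Hirzebruch's congruence recalled above: a direct computation gives $\chi(m\#\CC\PP^{2n}) = 2 + m(2n-1)$ and, since signatures and Pontryagin numbers add under connected sum, $\sigma(m\#\CC\PP^{2n}) = m$, whence $\chi \equiv (-1)^n\sigma \bmod 4$ holds \emph{precisely} when $m$ is odd. So it remains to construct an almost complex structure when $m$ is odd. Write $M := m\#\CC\PP^{2n}$; being a connected sum of simply connected manifolds it is simply connected. An almost complex structure on $M$ is a section of the bundle over $M$ with fibre $\SOg(4n)/\Ug(2n)$ associated to the oriented frame bundle, equivalently a lift of the classifying map $\tau_M\colon M \to B\SOg(4n)$ of $TM$ through
\[
\SOg(4n)/\Ug(2n)\lr B\Ug(2n)\lr B\SOg(4n).
\]
The plan is to run obstruction theory for this lift. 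The decisive structural input is that $M$ has a CW structure whose cells all lie in even dimensions: one $0$-cell, $m$ cells in each dimension $2,4,\dots,4n-2$, and a single top cell. Consequently $H^{\mathrm{odd}}(M;A)=0$ for every coefficient group $A$, so every obstruction sitting in an odd cohomological degree vanishes automatically.

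Next I would produce a lift over the $(4n-2)$-skeleton by hand, disposing of all obstructions below the top dimension without computing them. Arranging the connected sum so that the gluing regions lie in the interior of the top cell, the $(4n-2)$-skeleton is contained in $\bigsqcup_{i=1}^{m}\big(\CC\PP^{2n}\setminus D^{4n}\big)$, where $TM$ is canonically identified with $T\CC\PP^{2n}$. The integrable complex structure of each $\CC\PP^{2n}$ therefore restricts to a section over each piece, and since the fibre $\SOg(4n)/\Ug(2n)$ is connected these sections amalgamate over the common $0$-cell to a section over the whole $(4n-2)$-skeleton. (The homotopy groups of $\SOg(4n)/\Ug(2n)$ — simply connected, $\pi_2=\ZZ$, and in the stable range odd-degree homotopy is $2$-torsion concentrated in degrees $\equiv 7 \bmod 8$ — confirm consistency, but the geometric section makes this bookkeeping unnecessary.) There then remains a single obstruction to extending over the top cell,
\[
\mathfrak o(J)\in H^{4n}\big(M;\pi_{4n-1}(\SOg(4n)/\Ug(2n))\big)\cong \pi_{4n-1}(\SOg(4n)/\Ug(2n)),
\]
the last isomorphism using that $M$ is closed, oriented and simply connected. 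The manifold $M$ is almost complex iff $\mathfrak o(J)$ can be made to vanish by a suitable choice of the section $J$ over the middle skeleton.

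The heart of the matter — and the step I expect to be hardest — is to identify the unstable coefficient group $\pi_{4n-1}(\SOg(4n)/\Ug(2n))$ and to evaluate $\mathfrak o(J)$ together with the indeterminacy produced by varying $J$ on the middle-dimensional cells. A first handle is provided by the candidate Chern classes of $J$: on the $(4n-2)$-skeleton one has $c_1,\dots,c_{2n-1}$ restricting to $(1+x_i)^{2n+1}$ on the $i$-th summand, and extending $J$ over the top cell amounts to prescribing $c_{2n}\in H^{4n}(M)\cong\ZZ$ subject to $c_{2n}[M]=\chi(M)$ (top Chern number equals Euler characteristic) and to the degree-$4n$ part of the Pontryagin relation $\sum_j(-1)^jp_j=\big(\sum_i c_i\big)\big(\sum_i(-1)^ic_i\big)$, which pins down $2c_{2n}$ in terms of $p_n(M)$ and the lower $c_i$. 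This already forces a parity constraint on $\chi(M)$; the sharper $\bmod 4$ statement, matching Hirzebruch's necessary condition, should then emerge from the fine structure of $\pi_{4n-1}(\SOg(4n)/\Ug(2n))$ and the signature theorem relating $p_n[M]$ to $\sigma(M)$, the $\bmod 4$ coming from the freedom in $J$ over the middle cells.

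Granting this analysis, for $m$ odd the congruence $\chi(M)\equiv(-1)^n\sigma(M)\bmod 4$ holds, so $\mathfrak o(J)$ is annihilated by an admissible choice of $J$; the lift then extends over the top cell and $M$ acquires an almost complex structure, completing the equivalence. I expect the genuine technical work to concentrate entirely in the third paragraph: the remaining steps are either formal (the obstruction setup, the even-cell vanishing) or geometric (the section over the lower skeleton from the complex structures of the summands), whereas pinning down the unstable homotopy group and the precise $\bmod 4$ value of the top obstruction is where the real difficulty lies.
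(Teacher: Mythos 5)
The ``only if'' direction of your proposal is correct and is the same as the paper's (Hirzebruch's congruence plus additivity of $\chi$ and $\sigma$ under connected sum). For the ``if'' direction, your obstruction-theoretic setup is legitimate --- it is in fact the machinery behind the Sutherland--Thomas theorem that the paper quotes as Theorem \ref{T:SACSandACS} --- and the reductions you do carry out (even-cell CW structure, a section over the $(4n-2)$-skeleton glued from the complex structures of the summands, a single remaining obstruction $\mathfrak o(J)\in H^{4n}(M;\pi_{4n-1}(\SOg(4n)/\Ug(2n)))$) are fine. But the proof stops exactly at the decisive point: the identification of $\pi_{4n-1}(\SOg(4n)/\Ug(2n))$, the evaluation of $\mathfrak o(J)$, and the computation of its indeterminacy as $J$ varies are all deferred (``Granting this analysis\dots''). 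Nothing is actually constructed or verified for odd $m$, so as it stands this is not a proof.

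Moreover, the mechanism you propose for closing the gap --- ``the mod $4$ congruence holds, hence $\mathfrak o(J)$ can be annihilated by a suitable choice of $J$ on the middle cells'' --- is false as a general principle, so no purely formal completion of your third paragraph can work; one must use specific structure of $m\#\CC\PP^{2n}$. Concretely, $S^4\times S^4$ satisfies every hypothesis your framework invokes: it is closed, simply connected, has a CW structure with cells only in even dimensions ($0,4,4,8$), its tangent bundle admits a complex structure over the $(4n-2)$-skeleton $S^4\vee S^4$ (the bundle is trivial there), and Hirzebruch's congruence holds ($\chi=4$, $\sigma=0$, $n=2$). Yet it admits no almost complex structure: $c_1\in H^2=0$ and $c_3\in H^6=0$, so $p_1=-2c_2$ forces $c_2=0$, and then $p_2=2c_4$ forces $c_4[M]=\tfrac12 p_2[M]=0\neq 4=\chi(M)$. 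So the top obstruction is not governed by the congruence; what decides it (this is the content of Theorem \ref{T:SACSandACS}) is whether some \emph{stable} almost complex structure has top Chern class equal to the Euler class. That is precisely the computation the paper performs: it determines $\ker r\subset \widetilde K(m\#\CC\PP^{2n})$ to parametrize all stable almost complex structures, and then, for odd $m$, exhibits an explicit element whose top Chern class evaluates to $m(2n-1)+2=\chi(m\#\CC\PP^{2n})$ (Proposition \ref{prop:modd}). Your remark about ``candidate Chern classes'' points in this direction, but you never produce a candidate nor verify the coefficient count, and that computation is the actual mathematical content of the theorem.
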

In odd complex dimensions, the connected sums $m\# \CC\PP^{2n+1}$ are K\"ahler, since $\CC\PP^{2n+1}$ admits an
orientation reversing diffeomorphism and therefore $m \# \CC\PP^{2n+1}$ is diffeomorphic to $\CC\PP^{2n+1}\# (m-1)\overline{\CC\PP^{2n+1}}$ which is a blow--up of $\CC\PP^{2n+1}$ in $m-1$ points, hence K\"ahler. Furthermore Theorem \ref{T:Maintheorem} is known for $n=1$   and $n=2$, see \cite{Audin1991} and
\cite{GeigesMueller2000} respectively. In both cases the authors use general results on the
existence of almost complex structures on manifolds of dimension $4$ and $8$ respectively.

We will prove Theorem \ref{T:Maintheorem} as follows. In \cite[Theorem 1.1]{Sutherland1965} or in
\cite[Theorem 1.7]{Thomas1967} the authors showed
\begin{thm}\label{T:SACSandACS}
Let $M$ be a closed smooth $2d$-dimensional manifold. Then $TM$ admits an almost complex structure if and only if it admits a stable almost complex structure $E$ such that $c_d(E) = e(M)$, where $c_d$ is the $d$--th Chern class of $E$ and $e(M)$ is the Euler class of $M$.
\end{thm}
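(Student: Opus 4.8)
The plan is to treat the two implications separately, with the forward direction immediate and the reverse direction handled by destabilising the given bundle one complex dimension at a time. If $J$ is an almost complex structure on $TM$, then $(TM,J)$ is a complex vector bundle $E$ of complex rank $d$ with $E_\RR \cong TM$, which is in particular a stable complex structure; since the top Chern class of a complex bundle equals the Euler class of its underlying oriented real bundle, $c_d(E) = e((TM,J)_\RR) = e(M)$. So the only substantive content is the converse.

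For the converse, write the stable almost complex structure as a complex bundle $E$ of complex rank $d+N$ with $E_\RR \cong TM \oplus \e^{2N}$ and $c_d(E) = e(M)$. Over the $2d$-dimensional complex $M$, a complex bundle of rank $r$ has a nowhere-zero section whenever $r > d$: such a section is a section of the associated sphere bundle with fibre $S^{2r-1}$, the fibre is $(2r-2)$-connected so no obstruction occurs below $H^{2r}(M;\ZZ)$, and since $2r > 2d = \dim M$ this group vanishes. First I would therefore peel off trivial complex line bundles successively, lowering the rank from $d+N$ through $d+1$, to obtain a splitting $E \cong F \oplus \underline{\CC}^{\,N}$ with $F$ of complex rank $d$. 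Adding a trivial summand does not change the Chern classes, so $c_d(F) = c_d(E) = e(M)$, and realifying gives $e(F_\RR) = c_d(F) = e(M) = e(TM)$.

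It remains to upgrade the stable isomorphism $F_\RR \oplus \e^{2N} \cong TM \oplus \e^{2N}$ to an honest isomorphism $F_\RR \cong TM$ of rank-$2d$ oriented real bundles; this is where the Euler class hypothesis enters. Both bundles have rank $2d = \dim M$, the edge of the stable range, so they are not determined by their stable class alone. I would pass up to rank $2d+1$: since $2d+1 > \dim M$, stably isomorphic bundles of that rank are isomorphic, so $F_\RR \oplus \e \cong TM \oplus \e =: \zeta$. The two rank-$2d$ reductions $F_\RR$ and $TM$ of $\zeta$ correspond to two sections of the sphere bundle $S(\zeta) \to M$ with fibre $S^{2d}$, and since $S^{2d}$ is $(2d-1)$-connected, any two such sections differ by a single primary obstruction class in $H^{2d}(M;\pi_{2d}(S^{2d})) = H^{2d}(M;\ZZ)$. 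The key point is that this difference class is exactly $e(F_\RR) - e(TM)$; by the previous step it vanishes, so the two sections are homotopic, whence $F_\RR \cong TM$, and the complex structure of $F$ transports to the desired almost complex structure on $TM$.

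The main obstacle is this last identification: verifying that the top obstruction to destabilising a rank-$(2d+1)$ real bundle to rank $2d$ is measured by the Euler class, i.e. that the difference of two sections of $S(\zeta)$ equals the difference of the Euler classes of the complementary rank-$2d$ subbundles. This is a standard but delicate computation in the obstruction theory of $B\SOg(2d) \to B\SOg(2d+1)$, whose fibre is $\SOg(2d+1)/\SOg(2d) = S^{2d}$, and care is needed precisely because we sit at the edge $2d = \dim M$ where the Euler class is the one genuinely unstable invariant. Everything in lower dimensions is governed by the existence of the stable complex structure and contributes no obstruction, which is exactly why only the single condition $c_d(E) = e(M)$ survives.
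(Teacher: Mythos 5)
The paper does not prove this theorem at all --- it quotes it from Sutherland and Thomas --- so there is no internal proof to compare against; but your strategy (peel off trivial complex line bundles to get a rank-$d$ complex bundle $F$ with $c_d(F)=e(M)$, then compare $F_\RR$ with $TM$ inside their common stabilisation $\zeta:=TM\oplus\e$ via obstruction theory for sections of the sphere bundle $S(\zeta)$) is essentially the classical route. The forward implication, the destabilisation of $E$ over the $2d$-complex $M$, and the step $F_\RR\oplus\e\cong TM\oplus\e$ are all correct.

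The gap is precisely in what you call the key point. The primary difference class $\delta=d(s_1,s_2)\in H^{2d}(M;\ZZ)$ of the two sections of $S(\zeta)$ is \emph{not} $e(F_\RR)-e(TM)$; the correct relation is
\[
e(F_\RR)-e(TM)\;=\;\chi(S^{2d})\,\delta\;=\;2\delta .
\]
Indeed, writing $p^*\zeta\cong\e\oplus\tau$ over $S(\zeta)$, where $\tau$ is the tautological complement (so that $s^\perp=s^*\tau$), the restriction of $\tau$ to a fibre is $TS^{2d}$, whose Euler class is \emph{twice} a generator of $H^{2d}(S^{2d};\ZZ)$; hence changing a section by $\delta$ changes $s^*e(\tau)$ by $2\delta$, not by $\delta$. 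You can see the factor $2$ concretely for $\zeta=\e^{2d+1}$ over $S^{2d}$: sections are maps $f\colon S^{2d}\to S^{2d}$, the complement is $f^*TS^{2d}$ with Euler number $2\deg f$, while the difference class of two sections is the difference of their degrees. Consequently, from $e(F_\RR)=e(TM)$ your argument only yields $2\delta=0$, i.e.\ that $\delta$ is $2$-torsion, and over a general $2d$-dimensional complex this is where the proof would stall (which is consistent with the fact that the identity you assert cannot be the right universal formula). The repair uses a hypothesis you never invoke: $M$ is a \emph{closed manifold}, and it is orientable because it carries a stable complex structure ($w_1=0$); hence $H^{2d}(M;\ZZ)$ is free (isomorphic to $\ZZ$ on each component), so $2\delta=0$ forces $\delta=0$, the sections are homotopic, and $F_\RR\cong TM$ as claimed. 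With that one correction --- replace ``the difference class equals $e(F_\RR)-e(TM)$'' by ``twice the difference class equals $e(F_\RR)-e(TM)$, and the top integral cohomology of a closed oriented manifold has no $2$-torsion'' --- your proof is complete.
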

In Section \ref{S:Stable_almost_complex_structures}
we will describe the full set of stable almost complex structures in the reduced
$K$--theory of $m\#\CC\PP^{2n}$. In Section \ref{S:modd} we give, for odd $m$, an explicit example of a stable almost complex structure to which Theorem \ref{T:SACSandACS} applies. \\

\noindent {\it Acknowledgements} We wish to thank Thomas Friedrich for valuable comments on an earlier version of the paper. We are also grateful to the anonymous referee for his careful reading and helpful comments.

\section{Stable almost complex structures on $m\#
\CC\PP^{2n}$}\label{S:Stable_almost_complex_structures}

For a CW complex $X$ let $K(X)$ and ${KO}(X)$ denote the complex and real $K$--groups respectively.
Moreover we denote by $\widetilde K^{}\left(X\right)$ and $\widetilde{KO}^{}\left(X\right)$ the
reduced groups. Let $r \colon K(X) \to KO(X)$ denote the real reduction map, which can be
restricted to a map $\widetilde K^{}\left(X\right)\to \widetilde{KO}^{}\left(X\right)$. We denote
the restricted map again with $r$. A real vector bundle $F$ over $X$ has a stable almost complex
structure if there is a an element $y \in \widetilde K(X)$ such that $r(y)=F-\dim F$. Since $r$
is a  group homomorphism, the set of all stable complex vector bundles, such that the underlying
real vector bundle is stably isomorphic to $F$, is given by
\[
 y + \ker r \subset \tilde K(X),
\]
where $y$ is such that $r(y) =F -\dim F$. Let $c \colon KO(X) \to K(X)$ denote the complexification
map and $t \colon K(X) \to K(X)$ the map which is induced by complex conjugation of complex
vector bundles. The maps $t$ and $c$ are ring homomorphisms, but $r$ preserves only the group
structure. The following idendities involving the maps $r,c$ and $t$ are well known
\begin{align*}
  c\circ r &= 1+ t \colon K(X) \to K(X),\\
  r\circ c &= 2 \colon KO(X) \to KO(X).
\end{align*}
We will write $\bar y=t(y)$ for an element $y \in K(X)$.

For two oriented manifolds $M$ and $N$ of same dimension $d$, we denote by $M\# N$ the connected
sum of $M$ with $N$ which inherits an orientation from $M$ and $N$. First, let us
characterise the stable tangent bundle of
$M\# N$ by

\begin{lem}\label{L:StableTangentBundleConnectedSum}
  Let $p_M\colon M\# N \to M$ and $p_N \colon M\#N \to N$ be collapsing maps to each factor of $M\#
  N$. Then we have
  \[
    p_M^\ast(M)\oplus p_N^\ast(N) \cong T(M\#N)\oplus \varepsilon^d.
  \]
\end{lem}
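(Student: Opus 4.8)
The plan is to build the isomorphism by hand from the standard description of the connected sum and then glue two locally defined isomorphisms over the connecting neck. Write $M\# N = M^\circ\cup_{S}N^\circ$, where $M^\circ = M\setminus\mathrm{int}\,D^d$, $N^\circ=N\setminus\mathrm{int}\,D^d$, and the two pieces are glued along a collar of the common boundary sphere $S = S^{d-1}$. I would normalise the collapsing maps so that $p_M$ is the identity on $M^\circ$, maps the collar of $N^\circ$ radially into the removed disc $D\subset M$ (sending the boundary sphere to $\partial D$ and the radial coordinate to the radius), and collapses the remaining bulk of $N^\circ$ to the centre of $D$; the map $p_N$ is described symmetrically. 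With this choice $dp_M$ is a fibrewise isomorphism precisely on the open set $U\supset M^\circ$ on which $p_M$ is a local diffeomorphism (everything except the collapsed bulk of $N^\circ$), and likewise $dp_N$ on the symmetric set $V\supset N^\circ$, with $U\cup V = M\# N$ and $U\cap V$ the neck $S\times(\text{interval})$.

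Next I would produce the two local isomorphisms. Over $U$ the differential gives $dp_M\colon T(M\# N)|_U\xrightarrow{\ \cong\ }p_M^\ast TM$, while $p_N|_U$ is homotopic to the constant map onto the centre of the disc in $N$ (push the collar into the bulk), so $p_N^\ast TN|_U\cong\varepsilon^d$; combining these yields an isomorphism $\Phi_U\colon p_M^\ast TM\oplus p_N^\ast TN\xrightarrow{\cong}T(M\# N)\oplus\varepsilon^d$ over $U$. The symmetric construction gives $\Phi_V$ over $V$. The whole content of the lemma is then to arrange that $\Phi_U$ and $\Phi_V$ agree on the overlap $U\cap V$, so that they glue to a global isomorphism.

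The main obstacle is exactly this compatibility over the neck. There both $dp_M$ and $dp_N$ are isomorphisms, so both summands on the left are canonically identified with $T(M\# N)|_{U\cap V}\cong TS^{d-1}\oplus\varepsilon^1$; reconciling the two identifications therefore amounts to an isomorphism $TS^{d-1}\oplus\varepsilon^1\cong\varepsilon^d$, which exists by the classical trivialisation coming from the embedding $S^{d-1}\subset\RR^d$ (tangent plus outward normal). The point I would stress is that a purely homotopy-theoretic version of this reasoning only yields a stable isomorphism and could leave a clutching obstruction in $\pi_{d-1}(GL_{2d}(\RR))$, which need not vanish. To obtain the honest (non-stable) isomorphism one must check that the trivialisations $p_N^\ast TN|_U\cong\varepsilon^d$ and $p_M^\ast TM|_V\cong\varepsilon^d$ used above are the standard one: writing the collapse onto the centre of the disc in polar coordinates exhibits precisely the splitting $TS^{d-1}\oplus\varepsilon^1\cong\varepsilon^d$, so the two local isomorphisms are built from the same data on $U\cap V$ and agree there. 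Granting this, $\Phi_U$ and $\Phi_V$ glue to the desired global isomorphism, and a dimension count ($d+d = 2d$ on each side) confirms consistency.
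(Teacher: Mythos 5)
Your overall strategy is the same as the paper's: define two local isomorphisms over the two halves using the differentials of the collapsing maps and the triviality of the "other" summand, then glue over the neck. You also correctly identified that the gluing over the neck is the entire content of the lemma. But your resolution of that step contains a genuine error. The two local isomorphisms can \emph{never} agree on the overlap, no matter how carefully the trivialisations are matched, because they route the summands differently: $\Phi_U$ carries $p_M^\ast TM$ onto $T(M\#N)$ and $p_N^\ast TN$ onto $\varepsilon^d$, while $\Phi_V$ carries $p_M^\ast TM$ onto $\varepsilon^d$ and $p_N^\ast TN$ onto $T(M\#N)$. Hence in any common trivialisation over the neck the discrepancy $\Phi_V^{-1}\circ\Phi_U$ is, in the best possible case, the block swap $(v,w)\mapsto(w,v)$ --- not the identity. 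So the claim that the two isomorphisms are ``built from the same data on $U\cap V$ and agree there'' is false, and no check of compatibility of the trivialisations $TS^{d-1}\oplus\varepsilon^1\cong\varepsilon^d$ can repair it. Moreover, the obstruction sits lower than the $\pi_{d-1}(GL_{2d}(\RR))$ clutching class you worried about: it is already a $\pi_0$ issue. The swap of two $d$-dimensional blocks has determinant $(-1)^d$, so for $d$ odd it does not even lie in the identity component of $GL_{2d}(\RR)$, and the gluing as you set it up genuinely fails; for $d$ even it lies in the identity component, but the maps still do not agree, so your final assembly step remains unjustified in every dimension.

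The missing idea --- and this is exactly what the paper does --- is to give up on agreement and instead \emph{interpolate} across the neck, after twisting one of the local isomorphisms by a sign. The paper defines $\Phi_N(\eta,w)=(w,\,-(p_N)_\ast\eta)$ (note the minus sign), so that in suitable trivialisations over the neck $\Phi_M$ is $(v,w)\mapsto(v,w)$ while $\Phi_N$ is $(v,w)\mapsto(w,-v)$, i.e.\ the matrix $\left(\begin{smallmatrix}0&I\\-I&0\end{smallmatrix}\right)\in SO(2d)$. This matrix, unlike the plain swap, has determinant $+1$ for all $d$ and is joined to the identity by the explicit path of rotations $\left(\begin{smallmatrix}\cos\theta&\sin\theta\\-\sin\theta&\cos\theta\end{smallmatrix}\right)$, $\theta\in[0,\pi/2]$. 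Since the neck is a cylinder $S^{d-1}\times[-1,1]$, one lets $\theta$ depend on the cylinder coordinate, $\theta=\tfrac{\pi}{4}(t+1)$, and this interpolation splices $\Phi_M$ and $\Phi_N$ into a global isomorphism. Your write-up lacks both the sign and the interpolation, and the first of these is not cosmetic: it is what makes the construction work in odd dimensions as well as even ones.
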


\begin{proof}
  Let $D_M \subset M$ and $D_N \subset N$ be embedded closed disks and $W_M$ and $W_N$ collar
  neighborhoods of $\partial(M \setminus  \mathring D_M)$ and $\partial(N \setminus \mathring D_N)$
  respectively, where $\mathring D$ denotes the interior of $D$. Thus $W_M \cong S^{d-1}\times [-2,0]$
  and $W_N \cong S^{d-1} \times [0,2]$. The manifold $M\#N$ is obtained by identifying
  $S^{d-1}\times 0 \subset W_M$ with $S^{d-1}\times 0\subset W_N$ by the identity map.
  Set $W := W_M \cup W_N \subset M\# N$ and note that $V_1:=p_M^\ast(M)\oplus p_N^\ast(N)$ as well
  as $V_2:=T(M\# N)\oplus \varepsilon^n$ are trivial over $W$. Moreover let $U_M\subset M\#N$ be the open set
  diffeomorphic to $(M\setminus W_M)\cup (S^{d-1}\times [-2,-1[)$ and analogous for $U_N
  \subset M\# N$.

  Now, since $V_1|_{U_M} \cong p_M^\ast(TM)\oplus\varepsilon^d$ and $p^\ast_M(TM)|_{U_M} = T(M\#
  N)|_{U_M}$ we have an isomorphism given by $\Phi_M \colon V_2|_{U_M} \to V_1|_{U_M}$, $(\xi,w)\mapsto
  ((p_M)_\ast(\xi),w)$. For $\Phi_N \colon V_2|_{U_N} \to V_1|_{U_N}$, we set $\Phi_N(\eta,w)
  =(w,-(p_N)_\ast(\eta))$. Moreover both vector bundles $V_1$ and $V_2$ are trivial over $W$ and
  it is possible to choose trivializations of $V_1$ and $V_2$ over $W$ such that $\Phi_M$ is
  given by $(v,w)\mapsto (v,w)$ over $W_M$ and such that $\Phi_N$ is represented by $(v,w)\mapsto
  (w,-v)$ over $W_N$. Over $S^{d-1}\times [-1,1]$ we can interpolate these isomorphisms by
  \[
    \begin{pmatrix}
      v \\
      w
    \end{pmatrix}\mapsto
    \begin{pmatrix}
      \cos\left(\frac{\pi}{4}(t+1)\right) & \sin\left(\frac{\pi}{4}(t+1)\right)\\
      -\sin\left(\frac{\pi}{4}(t+1)\right) & \cos\left(\frac{\pi}{4}(t+1)\right)\\
    \end{pmatrix}
    \begin{pmatrix}
      v \\
      w
    \end{pmatrix}
  \]
  for $t\in [-1,1]$. Using this interpolation we can glue $\Phi_M$ and $\Phi_N$ to a global isomorphism $V_2\to V_1$.
\end{proof}

Hence $T(M\#N) -d = TM + TN -2d$ in $\widetilde{KO}(M\# N)$, where $TM$ and $TN$ denote the
elements in $\widetilde{KO}(M\# N)$ induced by $p_M^*(TM)$ and $p_N^*(TN)$ respectively. This shows
that if $M$ and $N$ admit stable almost complex structures so does $M \# N$ (cf. \cite{MR0258037}).
For $M=N=\CC\PP^{2n}$ we consider the natural orientation induced by the complex structure of
$\CC\PP^{2n}$.

We proceed with recalling some basic facts on complex projective spaces. Let $H$ be the tautological
line bundle over $\CC\PP^{d}$ and let $x \in H^2(\CC\PP^{d};\ZZ)$ be
the generator, such that the total Chern class $c(H)$ is given by $1+x$. The cohomology ring of
$\CC\PP^{d}$ is isomorphic to $ \ZZ[x]/\langle x^{d+1}  \rangle$.
The $K$ and $KO$ theory of $\CC\PP^{d}$ are completely understood. Let $\eta:= H-1
\in \widetilde K^{}\left(\CC\PP^{d}\right)$ and
$\eta_R:=r(\eta) \in \widetilde{KO}(\CC\PP^{d})$. Then we have \begin{thm}[cf.\
  {\cite[Theorem 3.9]{Sanderson1964}}, {\cite[Lemma 3.5]{MR0202131}}, {\cite[p.\ 170]{MR0440554}} and {\cite[Proposition 4.3]{Thomas1974}}]\label{T:KtheoryOfComplexProjectiveSpace}~
    \begin{enumerate}[label=(\alph*)]
      \item $K(\CC\PP^{d}) = \ZZ[\eta]/\langle \eta^{d+1}\rangle$. The following sets of elements are an
              integral basis of $K(\CC\PP^{d})$
                \begin{enumerate}[label=(\roman*)]
                  \item $1,\,\eta,\, \eta(\eta+\bar\eta),\, \ldots,\, \eta(\eta +\bar\eta)^{n-1},
                      (\eta+\bar\eta),\, \ldots,\, (\eta + \bar\eta)^{n}$, and also, in case
                      $d$ is odd, $\eta^{2n+1} = \eta(\eta + \bar\eta)^n$.
                  \item
                  $1,\,\eta,\, \eta(\eta+\bar\eta),\, \ldots,\, \eta(\eta +\bar\eta)^{n-1},
                    (\eta-\bar\eta)(\eta+\bar\eta),\, \ldots,\, (\eta-\bar\eta)(\eta + \bar\eta)^{n-1}$,
                    and also, in case $d$ is odd, $\eta^{2n+1}$
                \end{enumerate}
                where $n$ is the largest integer $\leq d/2$.
           \item
              \begin{enumerate}[label=(\roman*)]
                \item if $d=2n$ then $KO(\CC\PP^{d}) = \ZZ[\eta_R]/\langle \eta_R^{n+1}\rangle$
                \item if $d=4n+1$ then $KO(\CC\PP^{d}) = \ZZ[\eta_R]/\langle \eta_R^{2n+1},2\eta_R^{2n+2}
                \rangle$
              \item if $d=4n+3$ then $KO(\CC\PP^{d}) = \ZZ[\eta_R]/\langle \eta^{2n+2}_R  \rangle$.
              \end{enumerate}

      \item The complex stable tangent bundle is given by $(2n+1)\bar\eta \in \tilde
        K(\CC\PP^{2n}$) and the real stable tangent bundle is given by $r( (2n+1)\bar\eta)) \in \widetilde{KO}^{}\left(\CC\PP^{2n}\right)$.

      \item The kernel of the real reduction map $r \colon \widetilde K^{}\left(\CC\PP^{d}\right) \to
        \widetilde{KO}^{}\left(\CC\PP^{d}\right)$ is freely generated by the elements
        \begin{enumerate}[label=(\roman*)]
          \item $\eta-\bar\eta, (\eta-\bar\eta)(\eta+\bar\eta),\ldots,
            (\eta-\bar\eta)(\eta+\bar\eta)^{\tfrac{d}{2}-1}$, if
            $d$ is even,
          \item $\eta-\bar\eta, (\eta-\bar\eta)(\eta+\bar\eta),\ldots,(\eta-\bar\eta)
            (\eta+\bar\eta)^{2n-1}, 2\eta^d$,
            if $d = 4n+1$,
          \item $\eta-\bar\eta, (\eta-\bar\eta)(\eta+\bar\eta),\ldots,(\eta-\bar\eta)
            (\eta+\bar\eta)^{2n}, \eta^d$,
            if $d = 4n+3$.
        \end{enumerate}
    \end{enumerate}
\end{thm}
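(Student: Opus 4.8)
\emph{Plan.} The whole statement reduces to the ring $K(\CC\PP^d)=\ZZ[\eta]/\langle\eta^{d+1}\rangle$ together with a single explicit computation of the conjugation $t$; the identities $c\circ r=1+t$ and $r\circ t=r$ then carry most of the remaining weight. I would organise the proof in the order (a), (c), (d), and quote (b) from the cited sources, since that is exactly where the genuine input lies. For (a) I would first obtain the ring structure from the Atiyah--Hirzebruch spectral sequence: as $H^\ast(\CC\PP^d;\ZZ)$ is free and concentrated in even degrees, the sequence collapses, $K(\CC\PP^d)$ is free of rank $d+1$, and the relation $\eta^{d+1}=0$ together with the fact that $1,\eta,\dots,\eta^d$ is a basis follows from $x^{d+1}=0$ via the splitting principle. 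The two prescribed bases I would then extract by a change of basis from the monomial basis. The one computation needed is the conjugation: since $H$ is a line bundle, $\overline H=H^{-1}$, so $\bar\eta=(1+\eta)^{-1}-1$ and hence
\[
  \eta+\bar\eta=\frac{\eta^2}{1+\eta}=\eta^2+\cdots,\qquad \eta-\bar\eta=H-H^{-1}=2\eta+\cdots,
\]
where the dots denote terms of higher $\eta$-filtration. Thus $\eta(\eta+\bar\eta)^j$ has leading term $\eta^{2j+1}$ and $(\eta+\bar\eta)^j$ has leading term $\eta^{2j}$; ordering the generators in (i) by filtration degree makes the transition matrix to $1,\eta,\dots,\eta^d$ unitriangular, so (i) is an integral basis. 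For (ii), writing $a_j=\eta(\eta+\bar\eta)^j$, one has $(\eta-\bar\eta)(\eta+\bar\eta)^j=a_j-\bar a_j=2a_j-(\eta+\bar\eta)^{j+1}$, so (ii) arises from (i) by replacing each even generator $(\eta+\bar\eta)^{j+1}$ by $2a_j-(\eta+\bar\eta)^{j+1}$, a unimodular move; hence (ii) is integral as well. In the odd-$d$ cases both bases simply acquire the extra top class $\eta^{2n+1}=\eta(\eta+\bar\eta)^n$.

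Part (c) I would read off from the Euler sequence $0\to\underline{\CC}\to(2n+1)\overline H\to T\CC\PP^{2n}\to0$, which gives $T\CC\PP^{2n}=(2n+1)(\overline H-1)=(2n+1)\bar\eta$ in $\widetilde K(\CC\PP^{2n})$, and applying $r$ yields the real statement. For part (d) the formal properties of $r,c,t$ do most of the work. Since realification is insensitive to conjugation, $r\circ t=r$, so $r(w-\bar w)=0$ for every $w$; in particular each $(\eta-\bar\eta)(\eta+\bar\eta)^j=a_j-\bar a_j$ lies in $\ker r$. Conversely, if $r(y)=0$ then $0=c(r(y))=(1+t)(y)$, so $\ker r$ consists of $t$-anti-invariant elements. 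For the top class one computes $t(\eta^d)=\bar\eta^{\,d}=(-1)^d\eta^d=-\eta^d$ when $d$ is odd (higher terms vanish since $\eta^{d+1}=0$), so $\eta^d$ is itself anti-invariant and $r(\eta^d)$ is $2$-torsion. Whether $\eta^d$ or only $2\eta^d$ lands in $\ker r$ is then decided by the $KO$-computation in (b): for $d=4n+1$ the relevant summand of $KO(\CC\PP^d)$ carries $2$-torsion, forcing the generator $2\eta^d$, whereas for $d=4n+3$ it is torsion-free, so $r(\eta^d)=0$ and $\eta^d$ itself generates. A rank count comparing the rank $d+1$ of $K$ with that of $KO$ from (b) confirms that the listed anti-invariant elements generate $\ker r$ freely.

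The main obstacle is part (b). Unlike (a), (c) and (d), it does not follow from the ring structure and the formal identities, but genuinely depends on $d\bmod 8$ through the torsion in $KO^\ast(\mathrm{pt})$, so it requires either the Atiyah--Hirzebruch spectral sequence for $KO$ with its nontrivial differentials or the explicit computations of the quoted references. Since the $2$-torsion phenomena in (d) are governed precisely by (b), this is the one place where I would invoke the literature rather than reprove everything from scratch.
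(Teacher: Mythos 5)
The first thing to say is that the paper contains no proof of this theorem: it is stated as background with ``cf.'' references to Sanderson, Adams--Walker, Karoubi and Thomas, and is only ever quoted afterwards. So your proposal is not an alternative to an argument in the paper; it is a reconstruction the paper never attempts, and as such it is sound in outline and correctly isolates where the irreducible input sits, namely in (b) (real Bott periodicity / the mod $8$ phenomena), while (a), (c), (d) do follow from the ring structure, the formula $\bar\eta=(1+\eta)^{-1}-1$, the identities $c\circ r=1+t$, $r\circ t=r$, $r\circ c=2$, and the Euler sequence. Your unitriangular change-of-basis argument for (a) is correct, and it even exposes a typo in the statement: as printed, basis (a)(ii) has only $2n$ elements for $d=2n$, one fewer than the rank $2n+1$; the missing element is $\eta-\bar\eta$, which the paper itself includes when it uses this basis (the sets $\Gamma_j$) in Section~\ref{S:Stable_almost_complex_structures}. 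One small repair in (a): the relation $\eta^{d+1}=0$ and the fact that $1,\eta,\dots,\eta^d$ generate are not consequences of the splitting principle; the right tool is the skeletal filtration of $\widetilde K$ (or injectivity of the Chern character, available since $H^*(\CC\PP^d;\ZZ)$ is torsion free), which is presumably what you intend by invoking the Atiyah--Hirzebruch spectral sequence.

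The one substantive gap is in (d)(ii). The mere existence of $2$-torsion in $\widetilde{KO}(\CC\PP^{4n+1})$ does not ``force'' $r(\eta^d)\neq 0$: a priori $r(\eta^d)$ could be the nonzero torsion class or zero, and your closing rank count cannot distinguish the two candidate kernels $\langle f_0,\dots,f_{2n-1},\eta^d\rangle$ and $\langle f_0,\dots,f_{2n-1},2\eta^d\rangle$, where $f_k:=(\eta-\bar\eta)(\eta+\bar\eta)^k$, since they have the same rank. What settles it is the projection formula making $r$ a $KO$-module map, $r(x)r(y)=r\bigl(x(y+\bar y)\bigr)$, which by induction gives $\eta_R^{k+1}=r\bigl(\eta(\eta+\bar\eta)^k\bigr)$; since $\eta(\eta+\bar\eta)^{2n}=\eta^{4n+1}=\eta^d$, this identifies $r(\eta^d)=\eta_R^{2n+1}$ with the $\ZZ_2$-generator, hence nonzero, and for $d=4n+3$ the same identity gives $r(\eta^d)=\eta_R^{2n+2}=0$, proving (d)(iii) without guessing. (Note that the printed presentation in (b)(ii), $\ZZ[\eta_R]/\langle\eta_R^{2n+1},2\eta_R^{2n+2}\rangle$, literally defines a torsion-free ring, since $\eta_R^{2n+2}\in\langle\eta_R^{2n+1}\rangle$; it must be read as $\ZZ[\eta_R]/\langle 2\eta_R^{2n+1},\eta_R^{2n+2}\rangle$, otherwise both your torsion argument and the paper's own item (d)(ii) would contradict it.) Finally, replace the rank count by what you already set up: $\ker r\subseteq\ker(1+t)$, and in basis (ii) one computes $\ker(1+t)$ exactly as the span of the $f_k$ (together with $\eta^d$ when $d$ is odd), because $(1+t)$ sends $\eta(\eta+\bar\eta)^k$ to the linearly independent elements $2\eta(\eta+\bar\eta)^k-f_k$ and kills the rest; then only the $\eta^d$-ambiguity remains, and it is resolved as above. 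With these repairs your outline is a complete and correct proof of (a), (c), (d) modulo the quoted (b).
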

Next we would like to describe the integer cohomology ring of $m\#\CC\PP^{2n}$. For that we
introduce the following notation: Let $\Lambda$ denote either $\ZZ$ or $\QQ$. We define an ideal
$R_d(X_1,\ldots,X_m)$ in $\Lambda[X_1,\ldots,X_m]$, where $X_1,\ldots,X_m$ are indeterminants, as
the ideal generated by the following elements
  \begin{align*}
    X_i \cdot X_j &,\quad  i\neq j\\
    X_i^{d} -X_j^{d} &,\quad i\neq j,\\
    X_j^{d+1} &,\quad j=1,\ldots,m.
  \end{align*}
Hence we have
  \begin{equation}
    H^*\left( m\#\CC\PP^{d};\Lambda  \right)\cong \Lambda[x_1,\ldots,x_m]/R_{d}(x_1,\ldots,x_m)
    \label{Eq:CohomologyOfConnectedSum}
  \end{equation}
where $x_j = p^*_j(x) \in H^2\left( m\#\CC\PP^{d};\Lambda  \right)$, for $x \in
H^2(\CC\PP^{d};\Lambda)$ defined as above and $p_j\colon m\#\CC\PP^{d} \to \CC\PP^{d}$ the
projection onto the $j$-th factor. Note that $p_j$ induces an monomorphism on cohomology.

The stable tangent bundle of $m\#\CC\PP^{2n}$ in
$\widetilde{KO}(m\#\CC\PP^{2n})$ is represented by
\[
  (2n+1)\sum_{j=1}^m r(\bar\eta_j)
\]
where $\eta_j := p_j^*(\eta) \in \widetilde K^{}\left(\CC\PP^{2n}\right) $ and
$r\colon \widetilde K(m\#\CC\PP^{2n}) \to \widetilde{KO}(m\#\CC\PP^{2n})$ is the real reduction map.
Hence the set of stable almost complex structures on $m\#\CC\PP^{2n}$ is given by
\begin{equation}\label{Eq:Kernel_of_r}
  (2n+1)\sum_{j=1}^m \bar\eta_j + \ker r,
\end{equation}

\noindent
For $k \in \NN$ and $j=1,\ldots,m$, set $w_j^k=p_j^*(H)^k - p_j^*(H)^{-k}$,
$e_j^{n-1}=\eta_j(\eta_j + \bar\eta_j)^{n-1}$ and $\omega = \eta_1^{2n}$.
\begin{prop}
  The kernel of $r\colon \widetilde K^{}\left(m\#\CC\PP^{2n}\right)\to \widetilde{KO}^{}\left(m\# \CC\PP^{2n}\right)$ is freely generated by
  \begin{enumerate}[label=(\alph*)]
    \item $\{w_j^k : k=1,\ldots,n-1,\, j=1,\ldots,m \} \cup \{e^{n-1}_1-e^{n-1}_{j} : j=
      2,\ldots,m\} \cup \{2e_1^{n-1} -\omega\}$, for $n$ even,
    \item  $\{ w_j^k : k=1,\ldots,n,\, j=1,\ldots,m \}$, for $n$ odd.
  \end{enumerate}
\end{prop}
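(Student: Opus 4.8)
The plan is to work entirely inside $\widetilde K(Y)$, where $Y:=m\#\CC\PP^{2n}$, by first pinning down its additive structure together with the conjugation $t$, and then locating $\ker r$ inside the lattice of conjugation–antiinvariants. Since $Y$ has cells only in even dimensions, the Atiyah–Hirzebruch spectral sequence collapses and $\widetilde K(Y)$ is free abelian; comparing with $H^*(Y)$ in \eqref{Eq:CohomologyOfConnectedSum} and using that the Chern character is a rational ring isomorphism, I would first establish the relations $\eta_i\eta_j=0$ for $i\neq j$, $\eta_j^{2n}=\omega$ for all $j$, and $\eta_j^{2n+1}=0$, so that $\{\eta_j^k:1\le k\le 2n-1,\ 1\le j\le m\}\cup\{\omega\}$ is a $\ZZ$--basis. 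Because $\bar H=H^{-1}$ one has $\bar\eta_j=(1+\eta_j)^{-1}-1$ and $\bar\omega=\omega$, so the matrix of $t$ in this basis is triangular with diagonal entry $(-1)^k$ on $\eta_j^k$; hence the antiinvariant lattice $A:=\ker(1+t)$ has rank $mn$.

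From $c\circ r=1+t$ and the rational injectivity of $c$ (a consequence of $r\circ c=2$) one gets $\ker r\subseteq A$, and conversely $r\otimes\QQ$ vanishes on $A_\QQ$, so $\ker r$ is a full--rank sublattice of $A$ of rank $mn$. As both candidate sets in the statement have exactly $mn$ elements, it remains to check (i) that they lie in $\ker r$, (ii) that they are $\ZZ$--linearly independent, and (iii) that they generate $\ker r$ and not merely a finite--index sublattice. For (i), the identity $r\circ t=r$ shows $y-\bar y\in\ker r$ for every $y$; applied to $y=p_j^*(H)^k$ this gives $w_j^k\in\ker r$, and applied to $y=e_j^{n-1}$, together with $(\eta_j+\bar\eta_j)^n=\eta_j^{2n}=\omega$, it gives $\theta_j:=2e_j^{n-1}-\omega=(\eta_j-\bar\eta_j)(\eta_j+\bar\eta_j)^{n-1}\in\ker r$; in particular $2e_1^{n-1}-\omega\in\ker r$. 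I would also record the Chebyshev--type identity $w_j^k=(\eta_j-\bar\eta_j)P_{k-1}(\eta_j+\bar\eta_j)$ with $P_{k-1}$ monic, so that over any fixed range the families $\{w_j^k\}$ and $\{(\eta_j-\bar\eta_j)(\eta_j+\bar\eta_j)^l\}$ span the same lattice; this is what makes the two presentations interchangeable and, reading off lowest--degree terms, yields (ii) together with the rank count $mn$.

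The decisive point, and the one I expect to be hardest, is the behaviour of $e_1^{n-1}-e_j^{n-1}$. Using $\eta_j+\bar\eta_j=c(\eta_R)$ (which is just $cr(\eta)=\eta+\bar\eta$ pulled back) and the projection formula $r(c(a)\cdot\xi)=a\cdot r(\xi)$, I would compute $r(e_j^{n-1})=p_j^*(\eta_R^{\,n})$, independently of $j$ up to the ambiguity below; consequently $2\,r(e_1^{n-1}-e_j^{n-1})=r(\theta_1-\theta_j)=0$, so that $r(e_1^{n-1}-e_j^{n-1})$ is a $2$--torsion class in $\widetilde{KO}(Y)$. Whether it vanishes is exactly what separates the two cases, and to decide it I would compute the relevant part of $\widetilde{KO}(Y)$ from the cofibre sequence $S^{4n-1}\xrightarrow{\phi}W\to Y$ with $W=\bigvee_{j=1}^m\CC\PP^{2n-1}$ (the same sequence re-derives the splitting $0\to\widetilde K(S^{4n})\to\widetilde K(Y)\to\widetilde K(W)\to0$ on the complex side, with $\omega$ the image of the Bott generator). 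The torsion in question is controlled by $\operatorname{coker}\bigl((\Sigma\phi)^*\colon \widetilde{KO}(\Sigma W)\to\widetilde{KO}(S^{4n})\bigr)$, and ultimately by the complexification $c\colon\widetilde{KO}(S^{4n})\to\widetilde K(S^{4n})$, which is an isomorphism for $4n\equiv0$ and multiplication by $2$ for $4n\equiv4$ modulo $8$ --- equivalently by the $2$--torsion in $KO^*(\CC\PP^{2n-1})$ recorded in Theorem~\ref{T:KtheoryOfComplexProjectiveSpace}(b), present precisely for $n$ odd. I expect the outcome to be that $r(e_1^{n-1}-e_j^{n-1})=0$ for $n$ even and equals the nonzero $2$--torsion class for $n$ odd.

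Granting this, I would finish as follows. For $n$ even, $e_1^{n-1}-e_j^{n-1}\in\ker r$, and since its lowest--degree term is $\eta_1^{2n-1}-\eta_j^{2n-1}$ it is primitive and hence not obtainable from the $w$'s (whose top--degree behaviour only produces the doubled combination $\theta_1-\theta_j=2(e_1^{n-1}-e_j^{n-1})$); adjoining these $m-1$ classes and the single top class $2e_1^{n-1}-\omega$ to $\{w_j^k:k\le n-1\}$ gives, by a triangular computation in the basis $\{\eta_j^k,\omega\}$ at the top degree, a $\ZZ$--basis of $\ker r$. For $n$ odd, $e_1^{n-1}-e_j^{n-1}\notin\ker r$ while $\theta_j\equiv w_j^n$ modulo $\langle w_j^l:l<n\rangle$, so $\{w_j^k:k\le n\}=\sum_{j}p_j^*\bigl(\ker r_{\CC\PP^{2n}}\bigr)$ already exhausts $\ker r$, using Theorem~\ref{T:KtheoryOfComplexProjectiveSpace}(c)(i) for each factor. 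In both cases independence and the rank count are immediate from the leading terms, so the genuine content is step (iii) --- that the listed elements generate $\ker r$ over $\ZZ$ --- which is precisely the parity--sensitive $2$--torsion computation above and is the main obstacle.
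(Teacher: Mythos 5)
Your overall skeleton is sound and in places close to the paper: the identification of $\widetilde K(Y)$ for $Y=m\#\CC\PP^{2n}$, the inclusion $\ker r\subseteq A:=\ker(1+t)$ with equal rank $mn$, the membership of $w_j^k$ and $2e_j^{n-1}-\omega$ in $\ker r$ via $r\circ t=r$, and above all the recognition that the whole problem is whether the $2$-torsion classes $r(e_1^{n-1}-e_j^{n-1})$ vanish ($n$ even) or not ($n$ odd). But the proof is genuinely incomplete at exactly the point you flag as decisive, and the tool you propose there cannot do the job. By Fujii's theorem (which the paper also invokes) $\widetilde{KO}^{-1}\bigl(\vee_{j=1}^m\CC\PP^{2n-1}\bigr)=0$, so $\widetilde{KO}(\Sigma W)=0$ and $\operatorname{coker}\bigl((\Sigma\phi)^*\colon\widetilde{KO}(\Sigma W)\to\widetilde{KO}(S^{4n})\bigr)$ is all of $\widetilde{KO}(S^{4n})\cong\ZZ$ for \emph{both} parities of $n$: this cokernel governs injectivity of $\pi^*$, not the torsion of $\widetilde{KO}(Y)$, and it sees no difference between the two cases. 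What actually decides the dichotomy (and is what the paper does) is: for $n$ even, the short exact sequence $0\to\widetilde{KO}(S^{4n})\to\widetilde{KO}(Y)\to\widetilde{KO}(\vee_j\CC\PP^{2n-1})\to0$ has torsion-free ends by Theorem~\ref{T:KtheoryOfComplexProjectiveSpace}(b), so $\widetilde{KO}(Y)$ is torsion free, your $2$-torsion class dies, and in fact $\ker r=\ker(1+t)=A$; for $n$ odd, one detects nonvanishing by applying $i^*$: $i^*(e_1^{n-1}-e_j^{n-1})$ restricts to $\eta^{2n-1}$ in the first and $-\eta^{2n-1}$ in the $j$-th wedge summand, and $r(\eta^{2n-1})$ is the nonzero $2$-torsion element of $\widetilde{KO}(\CC\PP^{2n-1})$ when $2n-1\equiv 1\pmod 4$ (Theorem~\ref{T:KtheoryOfComplexProjectiveSpace}(d)(ii): only $2\eta^{2n-1}$, not $\eta^{2n-1}$, lies in the kernel).

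There is a second gap, independent of the first: for $n$ odd and $m\geq 3$, knowing $e_1^{n-1}-e_j^{n-1}\notin\ker r$ for each $j$ does \emph{not} yield $\ker r=L$, where $L$ is the span of the $w_j^k$. Both $L$ and $\ker r$ lie between $L$ and $A$, and $A/L\cong(\ZZ/2)^{m-1}$ with the images of $e_1^{n-1}-e_j^{n-1}$ as generators; a nonzero subgroup of $(\ZZ/2)^{m-1}$ need not contain any of these distinguished generators (for instance the subgroup generated by the image of $2e_1^{n-1}-e_2^{n-1}-e_3^{n-1}$). So you must exclude \emph{every} nonzero class of $A/L$ from $\ker r$, i.e.\ you need the elements $r(e_1^{n-1}-e_j^{n-1})$, $j=2,\ldots,m$, to be linearly independent over $\ZZ/2$, not merely individually nonzero; your wording ``the nonzero $2$-torsion class'', as if there were a single one, would actually allow sums of two generators to lie in $\ker r$ and the statement would fail. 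The $i^*$-detection above supplies exactly this independence, since the classes hit distinct wedge summands. The paper avoids the issue altogether by a different route: it shows $\ker r_\#\subseteq (i^*)^{-1}(\ker r_\vee)$, computes the explicit basis $\{f_j^k\}\cup\{2e_j^{n-1}\}\cup\{\omega\}$ of that preimage using Theorem~\ref{T:KtheoryOfComplexProjectiveSpace}(d)(ii), and then cuts down by the necessary condition $\varphi(y)=(1+t)(y)=0$, which pins $\ker r_\#$ between the span of $\{f_j^k,\,2e_j^{n-1}-\omega\}$ and itself. Your $n$-even argument, by contrast, does close up once the torsion-freeness of $\widetilde{KO}(Y)$ is established, since your candidate set is a $\ZZ$-basis of $A$ and $\ker r\subseteq A$.
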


\begin{proof}
  Consider  the cofiber sequence
    \begin{equation}
      \bigvee_{j=1}^m \CC\PP^{2n-1} \stackrel{i}{\lr} m\# \CC\PP^{2n} \stackrel{\pi}{\lr} S^{4n}.
      \label{Eq:Cofibration}
    \end{equation}
  Note that the line bundle $i^*p_j^*(H)$ is the tautological line bundle over the $j$-th summand
  of $\vee_{j=1}^m \CC\PP^{2n-1}$ and the trivial bundle on the other summands, since the first
  Chern classes are the same. For the reduced groups we have
  \[
    \widetilde K^{}\left(\vee_{j=1}^m \CC\PP^{2n-1}\right) \cong \bigoplus_{j=1}^m
    \widetilde K^{}\left(\CC\PP^{2n-1}\right)
  \]
  and $i^*p_j^*(\eta)$ generates the $j$-th summand of the above sum according to Theorem
  \ref{T:KtheoryOfComplexProjectiveSpace}.
  The long exact sequence in $K$-theory of the cofibration \eqref{Eq:Cofibration} is given by
    \begin{equation}\label{Eq:LES}
      \cdots \to \widetilde K^{-1}\left(\vee_{j=1}^m \CC\PP^{2n-1}\right)\to
      \widetilde K^{}\left(S^{4n}\right) \to \widetilde K^{}\left(m\#\CC\PP^{2n}\right)\to
      \widetilde K^{}\left(\vee_{j=1}^m \CC\PP^{2n-1}\right)\to  \widetilde
      K^{1}\left(S^{4n}\right)\to \cdots
    \end{equation}
  From Theorem $2$ in \cite{Fujii1967} we have $\widetilde K^{-1}\left(\CC\PP^{2n-1}\right)=0$,
  hence $\widetilde K^{-1}\left(\vee_{j=1}^m \CC\PP^{2n-1}\right)=0$ and from Bott periodicity we
  deduce $\widetilde K^{1}\left(S^{4n}\right)=\widetilde K^{-1}\left(S^{4n}\right)=0$. So we obtain a short exact sequence
  \[
    0 \lr \widetilde K^{}\left(S^{4n}\right) \stackrel{\pi^*}{\lr}
    \widetilde K^{}\left(m\#\CC\PP^{2n}\right)\stackrel{i^*}{\lr}
    \widetilde K^{}\left(\vee_{j=1}^m \CC\PP^{2n-1}\right)\lr  0.
  \]
  which splits, since the involving groups are finitely generated, torsion
  free abelian groups.
  Let $\omega_\CC$ be the generator of $\widetilde K^{}\left(S^{4n}\right)$, then the set
  \[
    \left\{\pi^*(\omega_\CC)\right\}\cup
    \left\{\eta_j^k: j=1,\ldots,m,\; k=1,\ldots,2n-1\right\}
  \]
  is an integral basis of $\widetilde K^{}\left(m\# \CC\PP^{2n}\right)$. We claim that
  $\eta_j^{2n}=\pi^*(\omega_\CC)$ for all $j$. Indeed, the elements $\eta_j^{2n}$ lie in the kernel
  of $i^*$, hence there are $k_j \in \ZZ$ such that $\eta_j^{2n}=k_j \cdot \pi^*(\omega_\CC)$. Let
  $\widetilde{ch} \colon \widetilde K^{}\left(X\right) \to \widetilde{H}^{}\left(X;\QQ \right)$
  denote the Chern character for a finite CW complex $X$, then $\widetilde{ch}$ is a monomorphism for
  $X=m\#\CC\PP^{d}$ (since $\tilde H^{*}(m\#\CC\PP^{d};\ZZ)$ has no torsion, cf. \cite{MR0121801})
  and an isomorphism for $X=S^{d}$ onto $\widetilde H^{*}(S^{d};\ZZ)$ embedded in $\widetilde
  H^{*}(S^{d};\QQ)$. Using the notation of \eqref{Eq:CohomologyOfConnectedSum} we have
  \[
    \widetilde{ch}(\eta_j^{2n}) = \left( e^{x_j}-1  \right)^{2n} = x_j^{2n}
  \]
  and using the naturality of $\widetilde{ch}$
  \[
    \widetilde{ch}\left( \pi^*(\omega_\CC)  \right) = \pi^*\left( \widetilde{ch}(\omega_\CC) \right)
    =\pm x_j^{2n}
  \]
  since $\pi^*$ is an isomorphism on cohomology in dimension $2n$. We can choose $\omega_\CC$ such
  that $\widetilde{ch}(\pi^*(\omega_\CC))=x_j^{2n}$. This shows $k_j=1$ for all $j$ and
  $\widetilde K^{}\left(m\#\CC\PP^{2n}\right)$ is freely generated by
  \[
    \left\{\eta_j^k: j=1,\ldots,m,\; k=1,\ldots,2n-1\right\}
    \cup \{\eta_1^{2n}= \cdots = \eta_m^{2n}\}.
  \]
  Hence $K(m\#\CC\PP^{2n})=\ZZ[\eta_1,\ldots,\eta_m]/R_{2n}(\eta_1,\ldots,\eta_m)$. Since $p_j^*(H)\otimes p_j^*(\overline H)$ is the trivial bundle we compute
  the identity
  \[
    \overline{\eta}_j = \frac{-\eta_j}{1+\eta_j}=-\eta_j+\eta_j^2
    - \cdots +\eta_j^{2n}.
  \] The ring $\ZZ[\eta_1,\ldots,\eta_m]/R_{2n}(\eta_1,\ldots,\eta_m)$ is isomorphic to
  \[
    \left. \left( \bigoplus_{j=1}^m \ZZ[\eta_j]/\langle \eta_j^{2n+1}  \rangle   \right)
    \right/\langle \eta_j^{2n} -\eta_i^{2n} : j\neq i  \rangle
  \]
  and from Theorem \ref{T:KtheoryOfComplexProjectiveSpace} the set $\Gamma_j$ which contains the
  elements
  \begin{align*}
    &\eta_j,\, \eta_j(\eta_j+\overline \eta_j),\ldots,\eta_j(\eta_j+\overline\eta_j)^{n-1}\\
    &\eta_j-\overline\eta_j,
    (\eta_j-\overline\eta_j)(\eta_j+\overline\eta_j),\ldots,(\eta_j-\overline\eta_j)
    (\eta_j+\overline\eta_j)^{n-1}
  \end{align*}
  together with $\{1\}$ is an integral basis of $\ZZ[\eta_j]/\langle \eta_j^{2n+1} \rangle$. Thus
  the set $\Gamma_1 \cup \ldots \cup \Gamma_m \subset \widetilde K(m\#\CC\PP^{2n})$ generates
  the group $\widetilde K(m\#\CC\PP^{2n})$. Observe that
    \begin{equation}\label{Eq:RelationBasisElements}
      (\eta_j+\bar\eta_j)^{k} = 2\eta_j(\eta_j + \bar\eta_j)^{k-1}
      - (\eta_j-\bar\eta_j)(\eta_j+\bar\eta_j)^{k-1},
    \end{equation}
  thus
    \begin{equation}\label{Eq:TopBasisElement}
      \eta_j^{2n} =
      (\eta_j+\bar\eta_j)^{n} = 2\eta_j(\eta_j + \bar\eta_j)^{n-1}
      - (\eta_j-\bar\eta_j)(\eta_j+\bar\eta_j)^{n-1}.
    \end{equation}
  We set $\omega:=\eta_j^{2n}$ for any $j =1,\ldots,m$ and
  \begin{align*}
    e_j^k &:= \eta_j(\eta_j + \bar\eta_j)^{k},\quad j=1,\ldots,m,\quad k=0,\ldots,n-1\\
    f_j^k &:= (\eta_j-\bar\eta_j)(\eta_j + \bar\eta_j)^{k},\quad j=1,\ldots,m,\quad k=0,\ldots,n-1
  \end{align*}
  and in virtue of relation \eqref{Eq:TopBasisElement} the set
  \[
    B:=\{\omega\} \cup \{e_j^k \colon j=1,\ldots,m,\, k=0,\ldots,n-1\}
    \cup \{f_j^k \colon j=1,\ldots,m,\, k=0,\ldots,n-2\}
  \]
  is an integral basis of $\widetilde K^{}\left(m\#\CC\PP^{2n}\right)$.

  We proceed with the computation of $KO(m\#\CC\PP^{2n})$.  We have
  a long exact sequence for $\widetilde{KO}$-theory like in \eqref{Eq:LES}.
  From Theorem $2$ in \cite{Fujii1967} we deduce $\widetilde{KO}^{-1}\left(\CC\PP^{2n}\right)=0$ and therefore $\widetilde{KO}^{-1}\left(\vee_{j=1}^m \CC\PP^{2n}\right)=0$. Moreover $\widetilde{KO}^{1}\left(S^{4n}\right)=\widetilde{KO}^{-7}\left(S^{4n}\right)= \widetilde{KO}^{}\left(S^{4n+7}\right)=0$ by Bott periodicity. Hence we obtain a short exact sequence
   \begin{equation}\label{EQ:SESKOgroups}
      0 \lr \widetilde{KO}^{}\left(S^{4n}\right)
    \lr \widetilde{KO}^{}\left(m\#\CC\PP^{2n}\right)\lr
    \widetilde{KO}^{}\left(\vee_{j=1}^m \CC\PP^{2n-1}\right)
    \lr 0.
   \end{equation}
  Now we have to distinguish
  between the cases where $n$ is even or odd. We first assume that $n=2l$.
  In that case the ring $KO\left(\CC\PP^{2n-1}\right)$
  is isomorphic to $\ZZ[\eta_R]/\langle \eta_R^{n} \rangle$, see Theorem
  \ref{T:KtheoryOfComplexProjectiveSpace} (b). Hence all groups in \eqref{EQ:SESKOgroups} are
  torsion free. Therefore the kernel of $r\colon \widetilde K^{}\left(m\#\CC\PP^{2n}\right) \to \widetilde{KO}^{}\left(m\#\CC\PP^{2n}\right)$ is the same as the kernel of
  \[
    \varphi := c\circ r = 1+t
    \colon \widetilde K^{}\left(m\#\CC\PP^{2n}\right) \to \widetilde K^{}\left(m\#
    \CC\PP^{2n}\right)
  \]
  since $r\circ c = 2$ and thus $c$ is a monomorphism of the torsion free
  part of $\widetilde{KO}^{}\left(m\#\CC\PP^{2n}\right)$.

  Next we compute
  a basis of $\ker\varphi$. Using relation \eqref{Eq:RelationBasisElements}
  we have $\varphi(\omega) =2\omega$, $\varphi(e_j^k)
  =2e_j^k - f_j^k$ and $\varphi(f_j^k)=0$, thus if
  \[
    y = \lambda \omega + \sum_{j=1}^m \sum_{k=0}^{n-1}\lambda_j^k e^k_j
  \]
  then
  \begin{align*}
  \varphi(y) &= 2\lambda \omega + \sum_{j=1}^m\sum_{k=0}^{n-1} \lambda_j^k (2e_j^k - f_j^k) = \left(2\lambda + \sum_{j=1}^m \lambda_j^{n-1}\right)\omega + \sum_{j=1}^m\sum_{k=0}^{n-2} \lambda_j^k(2e_j^k - f_j^k)
  \end{align*}
where we used that $f_j^{n-1} = 2e_j^{n-1} - \omega$ by Equation \eqref{Eq:TopBasisElement}. As $\omega$ and $2e_j^k - f_j^k$, $j=1,\ldots,m$, $k=0,\ldots,n-2$, are linearly independent, we conclude that $\varphi(y)=0$ if and only if $\lambda^k_j=0$ for $j=1,\ldots,m$, $k=1,\ldots,n-2$ and
  \[
    \sum_{j=1}^m \lambda^{n-1}_j +2\lambda =0.
  \]
  This implies that the set
  \[
    \{f_j^k : j=1,\ldots,m,\quad k=0,\ldots,n-2\} \cup
    \{e^{n-1}_1-e^{n-1}_{j} : j= 2,\ldots,m\} \cup \{2e_1^{n-1} -\omega\},
  \]
  is an integral basis of $\ker\varphi$. Note that from \eqref{Eq:TopBasisElement} we have
   $2e_1^{n-1}-\omega = (\eta_1-\bar\eta_1)(\eta_1 + \bar\eta_1)^{n-1}$. By an inductive argument
   we see that
     \begin{equation}\label{Eq:KernelExpressedInwjk}
       (\eta_j -\bar\eta_j)(\eta_j+\bar\eta_j)^k = w_j^{k+1} + \text{linear combinations of }
       w_j^1,\ldots,w^k_j
     \end{equation}
   and
   \[
     e_1^{n-1} - e_j^{n-1}= \eta_1^{2n-1} -\eta_j^{2n-1}.
   \]
   Thus an integral basis of the kernel, in case $n$ is even, is given by
   \[
     \{w_j^k \colon j=1,\ldots,m,\, k=1,\ldots,n-1\} \cup
     \{w_1^{n}\}\cup
     \{\eta_1^{2n-1}-\eta_j^{2n-1}\colon j=2,\ldots,m\}.
    \]
    Now let us assume that $n= 2l+1$. Consider the commutative diagram
    \begin{center}
    \begin{tikzcd}
     0 \arrow{r}\arrow{d} & \widetilde K(S^{4n}) \arrow{r}{\pi^*} \arrow{d}{r_S}&
      \widetilde K^{}\left(m\#\CC\PP^{2n}\right)
      \arrow{r}{i^*}\arrow{d}{r_{\#}}& \widetilde K(\vee_{j=1}^m \CC\PP^{2n-1})
      \arrow{r}\arrow{d}{r_{\vee}}&  \arrow{d} 0 \\
    0 \arrow{r} & \widetilde{KO}(S^{4n}) \arrow{r}{\pi^*} & \widetilde KO^{}\left(m\#\CC\PP^{2n}\right)
      \arrow{r}{i^*}& \widetilde{KO}(\vee_{j=1}^m \CC\PP^{2n-1}) \arrow{r}&   0
    \end{tikzcd}
    \end{center}
    The map $r_S \colon \widetilde K^{}\left(S^{8l+4}\right) \to \widetilde{KO}^{}\left(S^{8l+4}\right)$
    is an isomorphism and therefore $i^*|_{\ker r_\#}\colon \ker r_\# \to \ker r_\vee$ is an
    isomorphism, hence the rank of $\ker r_\#$ is $mn$. We see that the set
    \[
      \{f_j^k : j=1,\ldots,m,\, k=0,\ldots,n-2\} \cup \{ 2e_j^{n-1} : j=1,\ldots,m \}
      \cup \{\omega\}
    \]
    is an integral basis of $(i^*)^{-1}\left( \ker r_\vee  \right)$, which follows because
    $e_j^{n-1}
    = \eta_j^{2n-1} -(n-1)\omega$ and the structure of the kernel of $r_\vee$, see Theorem
    \ref{T:KtheoryOfComplexProjectiveSpace} (d) (ii).
    The elements $f_j^k$ for $j=1,\ldots,m$ and
    $k=0,\ldots,n-2$ lie in the kernel of $r_\#$. Let
    \[
      y = \lambda \omega + \sum_{j=1}^m \lambda_j^{n-1}2 e_j^{n-1}
    \]
    for $\lambda,\lambda_j^{n-1} \in\ZZ$ and suppose $r_\#(y)=0$. From $\varphi(\omega) = 2\omega$ and $\varphi(e_j^{n-1}) = (\eta_j + \bar\eta_j)^n = \eta_j^{2n} = \omega$ it follows that
    \[
      \lambda + \sum_{j=1}^m \lambda_j^{n-1} =0.
    \]
    Hence $\ker r_\#$ is freely generated by the elements $f_j^k$ and $2e_j^{n-1} - \omega$. Observe
    from \eqref{Eq:TopBasisElement} that $2e_j^{n-1} - \omega =(\eta-\bar\eta)(\eta+\bar\eta)^{n-1}$. Thus in case of $n$ odd we deduce like in \eqref{Eq:KernelExpressedInwjk} that the kernel
    of $r_\#$ is freely generated by $w_j^k$ for $j=1,\ldots,m$ and $k=1,\ldots,n$.
\end{proof}
Hence by Equation \eqref{Eq:Kernel_of_r}, stable almost complex structures of $m\#\CC\PP^{2n}$ for
$n$ even are given by elements of the form
\begin{equation}\label{Eq:SACS}
  y = (2n+1)\sum_{i=1}^m \bar\eta_j+\sum_{j=1}^m\sum_{k=1}^{n-1} a^k_j w_j^k + a_1^n w_1^n +
  \sum_{j=2}^{m} b_j (\eta_1^{2n-1} - \eta_j^{2n-1}).
\end{equation}
and for $n$ odd
\begin{equation}\label{Eq:SACSnOdd}
  y = (2n+1)\sum_{i=1}^m \bar\eta_j+\sum_{j=1}^m\sum_{k=1}^{n} a^k_j w_j^k
\end{equation}
for $a_j^k, b_j \in \ZZ$. For Theorem
\ref{T:SACSandACS} we have to compute the $2n$--th Chern class $c_{2n}(E)$ of a vector bundle $E$
representing an element of the
form \eqref{Eq:SACS} and \eqref{Eq:SACSnOdd}. Let $\eta_1^{2n-1}-\eta_j^{2n-1}$ denote also a vector bundle over $m\#\CC\PP^{2n}$
which represents the element $\eta_1^{2n-1}-\eta_j^{2n-1}$ in $\widetilde K^{}\left(m\#\CC\PP^{2n}\right)$.
The total Chern class of $\eta_1^{2n-1} - \eta_j^{2n-1}$ can be computed through
the Chern character: We have
\[
  \widetilde{ch}(\eta_1^{2n-1} - \eta_j^{2n-1}) =\widetilde{ch}(\eta_1)^{2n-1}-
  \widetilde{ch}(\eta_j)^{2n-1} = x_1^{2n-1} - x_j^{2n-1}.
\]
The elements of degree $k$ in the Chern character are given by $\nu_k(c_1,\ldots,c_k)/k!$
where $\nu_k$ are
the Newton polynomials. The coefficient in front of $c_k$ in $\nu_k(c_1,\ldots,c_k)$ is $k$ (see
\cite{MR1122592}, p. 195) and the other terms are products of Chern classes of lower degree, hence
the only non vanishing Chern class is given by
\[
  c_{2n-1}(\eta_1^{2n-1}-\eta_j^{2n-1}) = (2n-2)!\; (x_1^{2n-1}-x_j^{2n-1})
\]
Thus the total Chern class of a vector bundle $E$ representing an element of the form \eqref{Eq:SACS}
is given by
\begin{align*}
  c(E) &= (1-(x_1+\ldots + x_m))^{2n+1}\\
  &\qquad \cdot \left( \frac{1+nx_1}{1-nx_1}  \right)^{a_1^n}
  \prod_{j=2}^m(1+ (2n-2)! (x_1^{2n-1} - x_j^{2n-1}))^{b_j}
    \prod_{j=1}^m\prod_{k=1}^{n-1} \left(\frac{1+kx_j}{1-kx_j}\right)^{a_j^k}
\end{align*}
and for \eqref{Eq:SACSnOdd}
\[
  c(E) = (1-(x_1+\ldots + x_m))^{2n+1}
    \prod_{j=1}^m\prod_{k=1}^{n} \left(\frac{1+kx_j}{1-kx_j}\right)^{a_j^k},
\]
where the coefficient in front of $x_1^{2n}=\ldots=x_m^{2n}$ is equal to $c_{2n}(E)$.
\begin{rem}
Note that for $m=1$ (and complex projective spaces of arbitrary dimension) this total Chern class was already computed by Thomas, see \cite[p.\ 130]{Thomas1974}.
\end{rem}
\section{Almost complex structures on $m\#
\CC\PP^{2n}$} \label{S:modd}

We now describe an explicit stable almost complex structure on $m\# \CC \PP^{2n}$, where $m=2u+1$, for which the assumptions of Theorem \ref{T:SACSandACS} are satisfied, thereby producing an almost complex structure on $m\# \CC \PP^{2n}$. We choose, in the notation of \eqref{Eq:SACS} and \eqref{Eq:SACSnOdd}, $a_{j}^k = 2$ for  $j=1,\ldots,u$ and $k=1$, and all other coefficients $0$. Then the top Chern class is as desired:

\begin{prop}\label{prop:modd} Let $m=2u+1$ be an odd number. In the cohomology ring of $m\#\CC\PP^{2n}$, the coefficient $c_{2n}$ of $x_1^{2n} = \cdots = x_m^{2n}$ of the class
\[
c = (1-(x_1+\cdots + x_{2u+1}))^{2n+1} \prod_{r=1}^u \left(\frac{1+x_r}{1-x_r}\right)^2
\]
is
\[
c_{2n} = m(2n-1) + 2 = \chi(m\# \CC\PP^{2n}).
\]
\end{prop}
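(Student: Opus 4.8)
The plan is to exploit the rigid multiplicative structure of the ring $H^*(m\#\CC\PP^{2n};\QQ) \cong \QQ[x_1,\ldots,x_m]/R_{2n}(x_1,\ldots,x_m)$. The relation $x_ix_j = 0$ for $i\neq j$ kills every monomial that involves two distinct variables, so after reduction every class is a $\QQ$-linear combination of the pure powers $x_j^k$ ($1\le k\le 2n$) together with $1$, and all the $x_j^{2n}$ represent the same top class. Consequently the whole computation of $c_{2n}$ breaks up into a handful of one-variable coefficient extractions, and the only place where one has to be careful is in checking that all the mixed terms genuinely collapse.

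First I would reduce the two factors separately modulo the ideal. For the first factor, since $(x_1+\cdots+x_m)^k \equiv \sum_{j=1}^m x_j^k$ for $1\le k\le 2n$ (every remaining mixed term contains a product $x_ix_j$, $i\neq j$), I obtain, up to degree $2n$,
\[
(1-(x_1+\cdots+x_m))^{2n+1} \equiv 1 + \sum_{j=1}^m \bigl[(1-x_j)^{2n+1}-1\bigr].
\]
For the second factor I would apply the same principle to the product itself: writing $f_r = \bigl(\tfrac{1+x_r}{1-x_r}\bigr)^2 = 1 + g_r$, where $g_r$ is a power series in $x_r$ alone with vanishing constant term, the expansion of $\prod_{r=1}^u(1+g_r)$ has all higher products $g_rg_{r'}$ ($r\neq r'$) equal to zero, so that $\prod_{r=1}^u f_r \equiv 1 + \sum_{r=1}^u (f_r-1)$. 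This collapse of the product to a single sum is the one conceptual step of the argument and the main (if modest) obstacle.

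Next I would multiply the two reduced expressions. By the same vanishing of cross terms, the only surviving products pair a contribution in $x_j$ with a contribution in the \emph{same} variable, so the mixed term reduces to $\sum_{r=1}^u \bigl[(1-x_r)^{2n+1}-1\bigr](f_r-1)$ and the whole class organises itself variable by variable:
\[
c \equiv 1 + \sum_{r=1}^u\Bigl[(1-x_r)^{2n-1}(1+x_r)^2-1\Bigr] + \sum_{j=u+1}^m\bigl[(1-x_j)^{2n+1}-1\bigr],
\]
using $(1-x_r)^{2n+1}f_r = (1-x_r)^{2n-1}(1+x_r)^2$. The coefficient $c_{2n}$ is then the sum over all $m$ variables of the coefficient of $x^{2n}$ in the corresponding one-variable polynomial.

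Finally I would read off the two binomial coefficients. For $u<j\le m$ the coefficient of $x^{2n}$ in $(1-x)^{2n+1}$ is $\binom{2n+1}{2n}=2n+1$, and there are $m-u=u+1$ such indices. For $r\le u$, expanding with $(1+x)^2 = 1+2x+x^2$ gives the coefficient of $x^{2n}$ in $(1-x)^{2n-1}(1+x)^2$ as $2(-1)+(2n-1)=2n-3$, and there are $u$ such indices. Summing,
\[
c_{2n} = u(2n-3)+(u+1)(2n+1) = 4nu-2u+2n+1 = (2u+1)(2n-1)+2 = m(2n-1)+2.
\]
The identification with $\chi(m\#\CC\PP^{2n})$ then follows from the connected-sum formula $\chi(M\#N)=\chi(M)+\chi(N)-2$ in dimension $4n$, which yields $\chi(m\#\CC\PP^{2n}) = m(2n+1)-2(m-1) = m(2n-1)+2$.
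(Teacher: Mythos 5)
Your proposal is correct and follows essentially the same route as the paper: both exploit $x_ix_j=0$ to collapse the class into independent one-variable contributions, reducing $c_{2n}$ to the coefficient of $x^{2n}$ in $(1-x)^{2n-1}(1+x)^2$ (namely $2n-3$) for $u$ of the variables and in $(1-x)^{2n+1}$ (namely $2n+1$) for the remaining $u+1$, then summing. Your additive bookkeeping of the collapse and the explicit verification of $\chi(m\#\CC\PP^{2n})=m(2n-1)+2$ are just slightly more detailed renditions of steps the paper states in product form or takes as known.
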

\begin{proof}
As $x_i\cdot x_j=0$ for $i\neq j$, we have
\begin{align*}
(1-(x_1+\cdots + x_{2u+1}))^{2n+1} &= \sum_{j_0=0}^{2n+1} (-1)^{j_0}{2n+1\choose j_0} (x_1^{j_0} + \cdots + x_{2u+1}^{j_0})\\
&=\sum_{r=1}^{2u+1} \sum_{j_0=0}^{2n+1} (-1)^{j_0} {2n+1 \choose j_0} x_r^{j_0}.
\end{align*}
Thus,
\[
c = \prod_{r=1}^u (1-x_r)^{2n-1}(1+x_r)^2 \prod_{s=u+1}^{2u+1} (1-x_s)^{2n+1}.
\]
The factors $(1-x_s)^{2n+1}$ contribute $2n+1$ to $c_{2n}$, whereas the factors $(1-x_r)^{2n-1}(1+x_r)^2$ contribute $2n-3$. Thus,
\[
c_{2n} = u(2n-3) + (u+1)(2n+1) = (2u+1)(2n-1) + 2 = \chi((2u+1)\# \CC\PP^{2n}).
\]
\end{proof}

\bibliography{acsConnectedSumsBib}
\bibliographystyle{acm}

\end{document}